\numberwithin{equation}{section}
\newtheorem{theorem}{Theorem}[section]
\newtheorem{lemma}[theorem]{Lemma}
\newtheorem{proposition}[theorem]{Proposition}
\newcommand{\T}{{\mathcal T}}
\newcommand{\cC}{\mathcal{C}}
\newcommand{\curvskel}[1]{\mathcal{C}^{(#1)}\!}
\newcommand{\short}{\mathrm{sys}}
\def\mod{\mathrm{mod}}
\def\Ext{\mathrm{Ext}}
\newcommand{\ml}{\mathcal{ML}}
 \let\c@theorem=\c@subsection
 \let\c@conjecture=\c@subsection
 \let\c@lemma=\c@subsection
 \let\c@proposition=\c@subsection
 \let\c@claim=\c@subsection
 \let\c@question=\c@subsection
 \let\c@criterion=\c@subsection
 \let\c@vfconj=\c@subsection
 \let\c@definition=\c@subsection
 \let\c@notation=\c@subsection
 \let\c@remark=\c@subsection
 \let\c@example=\c@subsection
 \let\c@equation=\c@subsection
 \let\c@figure=\c@subsection
 \let\c@wrapfigure=\c@subsection
\begin{document}
\title{Lipschitz constants to curve complexes}
\author{V. Gadre, 
E. Hironaka, 
R. P. Kent IV, 
and C. J. Leininger}
\thanks{Hironaka was partially supported by Simons Foundation grant \#209171, Kent by NSF grant DMS-1104871, and Leininger by NSF grant DMS-0905748. 
The authors thank the Park City Mathematics Institute, where this work was begun.}
\date{December 18, 2012}
\address{Department of Mathematics, Harvard University, One Oxford Street, Cambridge, MA 02138}
\email{vaibhav@math.harvard.edu}
\address{Department of Mathematics, 
Florida State University,
1017 Academic Way, 208 LOV,
Tallahassee, FL 32306}
\email{hironaka@math.fsu.edu}
\address{Department of Mathematics,
University of Wisconsin,
480 Lincoln Drive,
Madison, WI 53706}
\email{rkent@math.wisc.edu}
\address{Department of Mathematics, 
University of Illinois at Urbana--Champaign,
1409 W. Green St.
Urbana, IL 61801}
\email{clein@math.uiuc.edu}

\maketitle

%\keywords{Teichm\"uller space, curve complex, Lipschitz}

\begin{abstract}
We determine the asymptotic behavior of the optimal Lipschitz constant for the systole map from Teichm\"uller space to the curve complex. 
\end{abstract}

\maketitle

\section{Introduction}
Let $S = S_g$ be a closed surface of genus $g \geqslant 2$.
We equip the Teichm\"uller space  $\T(S)$ of $S$ with the Teichm\"uller metric, and equip the $1$--skeleton $\curvskel{1}(S)$ of the complex of curves $\cC(S)$ with its usual path metric $d_\cC$.  
%Equipping $\cC(S)$ with its path metric, the map $\curvskel{1}(S) \to \cC(S)$ is a quasiisometric embedding, but the constants are not uniform in $g$.
%To avoid this issue, we work only with $\curvskel{1}(S)$.

In \cite{Mas-Min}, Masur and Minsky study the {\em systole map}
\[ 
	\short: \T(S) \to \curvskel{1}(S),
\]
which assigns a hyperbolic metric one of its shortest curves, called a {\em systole}.  
They prove that $\short$ is {\em $(K,C)$--coarsely Lipschitz} for $K, C > 0$, meaning that, for all $X$ and $Y$ in $\T(S)$
\[ 
	d_\cC(\short(X),\short(Y)) \leqslant K d_T(X,Y) + C.
\]
This is the starting point of their proof that $\curvskel{1}(S)$ is $\delta$--hyperbolic.
(The constant $\delta$ has recently been shown to be independent of $g$ by Aougab \cite{Aou}, Bowditch \cite{Bowditch.Curve}, and Clay, Rafi, and Schleimer \cite{Clay.Rafi.Schleimer}.)

In this paper we consider the {\em optimal Lipschitz constant}
\[ 
	\kappa_g = \inf \{ K \geqslant 0 \mid \short \mbox{ is } (K,C)\mbox{--coarsely Lipschitz for some } C > 0\}.
\]
We write $F(g) \asymp H(g)$ to mean that $F(g)/H(g)$ is bounded above and below by two positive constants, and prove the following theorem.
\begin{theorem} \label{T:main}
As $g \to \infty$ we have
\[ 
	\kappa_g \asymp \frac{1}{\log(g)}.
\]
\end{theorem}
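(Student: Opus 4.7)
The proof will establish matching upper and lower bounds on $\kappa_g$.

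\emph{Upper bound $\kappa_g \lesssim 1/\log g$.} My plan is to sharpen the Masur--Minsky coarse Lipschitz estimate \cite{Mas-Min} so as to extract its dependence on $g$. The two main ingredients are Kerckhoff's formula, which makes $t \mapsto \log \Ext_{X_t}(\alpha)$ a $2$-Lipschitz function along a Teichm\"uller geodesic $X_t$, and a Bers-type estimate which supplies on every $X \in \T(S_g)$ a simple closed curve of extremal length at most some $L(g)$ polynomial in $g$. A curve $\alpha$ of extremal length at most $L(g)$ on $X_{t_0}$ remains so for a Teichm\"uller-time window of size $\asymp \log g$, and within such a window the systole map moves only a bounded amount in $\curvskel{1}(S_g)$, by a surgery and intersection-number argument that must be done uniformly in $g$. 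Covering a geodesic of length $T$ with $O(T/\log g)$ such windows then yields $d_\cC(\short X_0, \short X_T) \lesssim T/\log g + O(1)$.

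\emph{Lower bound $\kappa_g \gtrsim 1/\log g$.} My plan is to construct, for each $g$, a pseudo-Anosov $\phi_g \in \Mod(S_g)$ whose Teichm\"uller translation length is $\asymp \log g$ but whose translation length on $\curvskel{1}(S_g)$ is bounded below by a positive constant independent of $g$. Iterating $\phi_g$ at a point of its Teichm\"uller axis produces pairs $(X, \phi_g^n X)$ with $d_T \asymp n \log g$ and $d_\cC \gtrsim n$, forcing any valid Lipschitz constant $K$ to satisfy $K \gtrsim 1/\log g$. A natural source is a Hironaka-style construction: take monodromies of the fibered faces of a fixed hyperbolic $3$-manifold along fibered classes whose fibers have growing genus $g$, or stabilize a fixed pseudo-Anosov on a small subsurface and post-compose with Dehn twists, so that the dilatation grows polynomially in $g$ while the mapping class acts loxodromically on $\curvskel{1}(S_g)$ with uniform translation length.

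\emph{Main obstacle.} I expect the lower bound to be the technical crux: producing pseudo-Anosovs that saturate the upper bound requires a careful balance between ``enough complexity'' to push the Teichm\"uller translation length up to $\log g$ and ``enough focus'' to keep the curve complex translation length uniformly bounded below. Certifying this lower bound on the curve complex translation length will likely require a subsurface projection analysis, since such a bound is not automatic from the $g$-independent hyperbolicity of $\curvskel{1}(S_g)$.
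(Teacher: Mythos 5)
Your overall plan---a windowing/chaining argument for the upper bound and iterating a carefully chosen pseudo-Anosov for the lower bound---mirrors the paper's, but there are both methodological differences and a genuine gap.

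\emph{Upper bound.} The strategy matches. The paper works with hyperbolic length rather than extremal length, replacing Kerckhoff's formula with Wolpert's inequality $\ell_Y(\alpha) \leqslant e^{d_\T(X,Y)}\ell_X(\alpha)$ and replacing a Bers estimate with a collar lemma for systoles: a systole $\alpha$ on $X$ admits a collar of width $> \ell_X(\alpha)/2$, so any $\beta$ has $\ell_X(\beta) > i(\alpha,\beta)\ell_X(\alpha)/2$. The $g$-dependent input that actually produces a window of size $\log g$ (and which your sketch leaves implicit under ``surgery and intersection-number argument'') is an Euler characteristic count: two curves that fill $S_g$ intersect at least $2g-1$ times. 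Combining these shows that if $d_\T(X,Y) \leqslant \log(g-\tfrac12)$ then the systoles at $X$ and $Y$ cannot fill, so $d_\cC(\short(X),\short(Y)) \leqslant 2$, and chaining gives the bound. Without explicitly identifying the linear-in-$g$ filling threshold, you cannot conclude that the window has size $\asymp \log g$ rather than $O(1)$. (The paper deduces the extremal-length statement you want as a corollary, using Lemma~2.5 of Masur--Minsky to compare the two systole maps.)

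\emph{Lower bound.} Here there is a real gap. You aim for $\ell_\cC(\phi_g) \geqslant c > 0$ uniformly with $\log\lambda(\phi_g) \asymp \log g$, but (i) your proposed source---monodromies over fibered classes of growing genus in a fixed hyperbolic $3$-manifold---produces \emph{small} dilatations with $\log\lambda \to 0$, not $\log\lambda \asymp \log g$, directly contradicting the regime you want; and (ii) certifying a $g$-uniform lower bound on $\ell_\cC$ for an explicit family is a hard problem, and subsurface projection analysis is an open-ended rather than routine tool here. The paper works entirely in the small-dilatation regime instead: a Penner-style composition $\phi_g = \rho_g \circ T_{a_0}T_{b_1}T_{c_0}T_{d_0}^{-1}$, where $\rho_g$ is an order-$(g-1)$ rotation, gives $\log\lambda(\phi_g) \leqslant \log(10g-21)/(g-2)$ by counting paths of length $g-2$ in the transition digraph, and $\ell_\cC(\phi_g) \geqslant 1/(2g-1)$ by Masur--Minsky's nesting criterion: if $\phi^r(P_\tau) \subset \mathrm{int}(P_\tau)$ for an invariant birecurrent train track $\tau$, then $\ell_\cC(\phi) \geqslant 1/r$. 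This is a finite combinatorial verification, far more tractable than a uniform lower bound on $\ell_\cC$. The ratio $\ell_\cC/\log\lambda \asymp 1/\log g$ comes out the same, but the paper's parametrization is the one that is actually checkable, and (as the paper notes) these $\phi_g$ do arise as monodromies of a single $3$-manifold---so your instinct about the source was right, but the asymptotics it yields are the paper's, not yours.
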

\noindent This is a sharp version of the closed case of Theorem 1.4 of \cite{Aou}, which provides a Lipschitz constant that is independent of $\chi(S)$.
An analogous result holds when hyperbolic length is replaced with extremal length, see Proposition \ref{Extremal.Length.Corollary}.

The upper bound on $\kappa_g$ is established by a careful version of Masur and Minsky's proof that $\short$ is coarsely Lipschitz.  
To establish the lower bound, we construct a sequence of pseudo-Anosov mapping classes whose translation lengths on $\T(S)$ and $\curvskel{1}(S)$ behave like $\log(g)/g$ and $1/g$, respectively.

%%%%%%%%%%%%%%%%%%%%%
\section{A Lipschitz constant.}
%%%%%%%%%%%%%%%%%%%%%

Given the isotopy class $[f:S \to X]$ of a marked hyperbolic surface and the homotopy class of a curve $\alpha$, we write $\ell_X(\alpha)$ for the hyperbolic length of $\alpha$ in $[f:S \to X]$.  
Let $\short(X)$ denote the set of $\alpha$ in $ \curvskel{0}(S)$ for which $\ell_X(\alpha)$ is minimal.  
If $\alpha$, $\beta$ are in $\short(X)$, then the geometric intersection number $i(\alpha,\beta)$ is at most $1$, and so the diameter of $\short(X)$ in $\curvskel{1}(S)$ is at most $2$.  
We abuse notation and view $\short$ as a map from $\T(S)$ to $\curvskel{1}(S)$, although the image of $X$ is actually a subset of diameter at most $2$.
One may obtain a {\em bona fide} map via the Axiom of Choice.

Given a hyperbolic surface $X$ and a geodesic $\alpha$ on $X$, a {\em collar neighborhood of width $r$} about $\alpha$ is an $r$--neighborhood whose interior is homeomorphic to an open annulus. 
We have the following lemma.

\begin{lemma} \label{L:collar on short}
Given a closed hyperbolic surface $X$, if $\alpha$ lies in $\short (X)$, then there is a collar neighborhood of $\alpha$ of width greater than $\ell_X(\alpha)/2$.
\end{lemma}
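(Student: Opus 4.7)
The plan is to pass to the universal cover $\tilde X = \H^2$, on which $\Gamma = \pi_1(X)$ acts by isometries, and to bound below the distance from a fixed lift $\tilde\alpha$ of $\alpha$ to its non-trivial $\Gamma$-translates. Let $A \in \Gamma$ generate the cyclic stabilizer of $\tilde\alpha$; it is hyperbolic with translation length $\ell := \ell_X(\alpha)$. Define
\[
W \;:=\; \tfrac{1}{2}\inf\bigl\{d(\tilde\alpha, B\tilde\alpha) \;:\; B \in \Gamma \setminus \langle A\rangle\bigr\};
\]
this is the supremum of widths $r$ for which the $r$-neighborhood of $\alpha$ in $X$ is an embedded annulus, since each $B \notin \langle A\rangle$ produces an extra lift $B\tilde\alpha$ whose strip in $\tilde X$ obstructs the embedding once $r$ exceeds $\tfrac{1}{2}d(\tilde\alpha, B\tilde\alpha)$. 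The lemma therefore reduces to the claim that $d(\tilde\alpha, B\tilde\alpha) > \ell$ for every $B \in \Gamma \setminus \langle A\rangle$.

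Suppose for contradiction that some $B \in \Gamma \setminus \langle A\rangle$ satisfies $d(\tilde\alpha, B\tilde\alpha) \leq \ell$. Let $\eta$ be the common perpendicular from $\tilde\alpha$ to $B\tilde\alpha$, of length $d \leq \ell$, with feet $q_1 \in \tilde\alpha$ and $q_2 \in B\tilde\alpha$. The projection $\bar\eta := \pi(\eta) \subset X$ is a geodesic arc from $p_1 := \pi(q_1)$ to $p_2 := \pi(q_2)$, perpendicular to $\alpha$ at both endpoints. Let $\alpha_1, \alpha_2$ denote the two subarcs of $\alpha$ joining $p_1$ to $p_2$, of lengths $a_1 + a_2 = \ell$; concatenating $\bar\eta$ with each yields a piecewise-geodesic essential simple closed curve $\gamma_i := \bar\eta \cup \alpha_i$ in $X$ (essentiality follows from the right-angle incidence of $\bar\eta$ with $\alpha$, which rules out a null-homotopy that would force $\bar\eta$ to coincide with $\alpha_i$).

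To close the argument, lift $\gamma_i$ based at $q_1$: it runs along $\eta$ to $q_2$, then along $B\tilde\alpha$ for distance $a_i$ to a third point $q_3^{(i)} = BA^{n_i}q_1 \in B\tilde\alpha$, and the triangle $q_1 q_2 q_3^{(i)}$ is right-angled at $q_2$ with legs $d$ and $a_i$. The hyperbolic Pythagorean theorem gives
\[
\cosh d\bigl(q_1, q_3^{(i)}\bigr) \;=\; \cosh(d)\,\cosh(a_i),
\]
so the geodesic representative $\gamma_i^*$ of $\gamma_i$ has length at most $\mathrm{arccosh}\bigl(\cosh(d)\cosh(a_i)\bigr)$. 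The systole property then yields $\cosh(\ell) \leq \cosh(d)\cosh(a_i)$ for each $i = 1, 2$. The main obstacle, which I would tackle by combining these two inequalities with the constraint $a_1 + a_2 = \ell$ via hyperbolic-trigonometric identities such as $\cosh(a_1)\cosh(a_2) = \tfrac{1}{2}\bigl(\cosh(\ell) + \cosh(a_1 - a_2)\bigr)$, is to extract the sharp lower bound $\cosh(d) > \cosh(\ell)$, i.e.\ $d > \ell$, contradicting the assumption $d \leq \ell$.
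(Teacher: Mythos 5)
Your reduction targets $d(\tilde\alpha, B\tilde\alpha) > \ell$ for every $B \in \Gamma \setminus \langle A\rangle$, i.e.\ an embedded collar of \emph{radius} $> \ell/2$. This is stronger than what the lemma actually establishes and is false in general: if another systole $\gamma$ of length $\ell$ crosses $\alpha$ (as happens on many surfaces, e.g.\ the Bolza surface) and $G \in \Gamma$ is the hyperbolic element whose axis is a lift $\tilde\gamma$ through a crossing point $q \in \tilde\alpha$, then $Gq \in G\tilde\alpha$ and $d(\tilde\alpha, G\tilde\alpha) \leqslant d(q,Gq) = \ell$. The misreading stems from a factor-of-two inconsistency in the paper itself: the stated definition calls a collar of ``width $r$'' an $r$-neighborhood, yet the proof writes the collar of width $w$ as $N_{w/2}(\alpha)$, the $(w/2)$-neighborhood, and the downstream application in Lemma~\ref{E:small distance lipschitz} only needs radius $> \ell_Y(\beta)/4$. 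The correct and intended target is $d(\tilde\alpha, B\tilde\alpha) > \ell/2$.

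With that target your argument closes immediately and is in fact sharper than the paper's. From $\cosh\ell \leqslant \cosh(d)\cosh(a_i)$ and $a_1 + a_2 = \ell$, take the $a_i \leqslant \ell/2$; since $\cosh\ell = 2\cosh^2(\ell/2) - 1 > \cosh^2(\ell/2)$ for $\ell > 0$, you get $\cosh(d) \geqslant \cosh\ell/\cosh(\ell/2) > \cosh(\ell/2)$, hence $d > \ell/2$. This matches what the paper extracts from the cruder triangle-inequality bound $\ell \leqslant \ell_X(\gamma_i^*) < d + a_i$. (To be complete you should also record that $\gamma_i$ is essential because it represents $BA^{n_i}\neq 1$ since $B\notin\langle A\rangle$, and that on a closed hyperbolic surface the shortest closed geodesic is automatically simple, so the systole bound $\ell \leqslant \ell_X(\gamma_i^*)$ applies to the possibly non-simple $\gamma_i^*$.) The ``obstacle'' you flag at the end, however, is not a computation waiting to be pushed through: the most your two inequalities give is $\cosh(d) \geqslant \cosh\ell/\cosh(\ell/2)$, which is strictly \emph{less} than $\cosh\ell$, so no trigonometric identity will deliver $d > \ell$.
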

\begin{proof}
Consider a maximal--width collar neighborhood $N_{w/2}(\alpha)$ of width $w$.  This has a self--tangency on its boundary.  From this one can construct a curve $\gamma$ that runs a distance $w/2$ from one of the points of tangency to $\alpha$, then at most half--way around $\alpha$ a distance at most $\ell_X(\alpha)/2$, and then a distance $w/2$ to the second point of tangency. Since $\alpha$ is a systole, we have
\[ 
	\ell_X(\alpha) \leqslant \ell_X(\gamma) < w + \ell_X(\alpha)/2. 
\]
So $w > \ell_X(\alpha)/2$ as required.
\end{proof}

Recall that a pair of isotopy classes of curves {\em fills} $S$ if, whenever the curves are realized transversally, the complement of their union is a set of topological disks.

\begin{lemma} \label{L:intersection number bound}
Given $\alpha$ and $\beta$ in $\curvskel{0}(S)$ that fill the surface $S$, we have
\[ 
	i(\alpha,\beta) \geqslant  2g-1. 
\]
\end{lemma}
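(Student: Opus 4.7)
I will prove Lemma~\ref{L:intersection number bound} by a direct Euler characteristic computation on the graph $G = \alpha \cup \beta \subset S$.

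First I would realize $\alpha$ and $\beta$ in minimal position, so that they meet transversally in exactly $i(\alpha,\beta)$ points; this is possible since the pair consists of two distinct isotopy classes of simple closed curves, and ``filling'' is a property of the homotopy classes, so it is still true in minimal position that the complementary regions are topological disks. Then $G = \alpha \cup \beta$ is a $4$-valent graph embedded in $S$, and the pair $(G, S \setminus G)$ gives a CW-decomposition of $S$. Write $V$, $E$, $F$ for the numbers of vertices, edges, and $2$-cells.

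Next I would read off $V = i(\alpha,\beta)$, and note that $4$-valency together with the handshake lemma gives $E = 2V = 2\,i(\alpha,\beta)$. The fill hypothesis says every face is a disk, so $F$ is precisely the number of components of $S \setminus G$, and in particular $F \geqslant 1$ (the complement of a $1$-complex in $S$ is certainly nonempty). Plugging into Euler's formula,
\[
 2 - 2g \;=\; \chi(S) \;=\; V - E + F \;=\; i(\alpha,\beta) - 2\,i(\alpha,\beta) + F \;=\; F - i(\alpha,\beta),
\]
so $F = i(\alpha,\beta) - 2g + 2$. Combining with $F \geqslant 1$ yields $i(\alpha,\beta) \geqslant 2g - 1$.

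\textbf{Main obstacle.} There is essentially no obstacle; the content of the argument is simply identifying the right CW-decomposition and keeping track of the edge count from $4$-valency. The only small point worth flagging is that one must work in minimal position (to equate $|\alpha \cap \beta|$ with $i(\alpha,\beta)$) while using that the fill property is intrinsic to the homotopy classes, so that the complementary regions in minimal position are still disks; beyond that, the bound $F \geqslant 1$ is immediate and the proof is a one-line Euler computation.
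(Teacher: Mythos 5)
Your proof is correct and is essentially identical to the paper's: both realize $\alpha\cup\beta$ in minimal position, count $V = i(\alpha,\beta)$ vertices and $E = 2\,i(\alpha,\beta)$ edges from $4$-valency, use the fill hypothesis to get $F \geqslant 1$ disk faces, and plug into $\chi(S) = V - E + F$. You spell out the minimal-position point a bit more explicitly than the paper does, but the argument is the same.
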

\begin{proof}
The union $\alpha \cup \beta$ is a graph on $S$ with $i(\alpha,\beta)$ vertices and $2 i(\alpha,\beta)$ edges. 
The complement is a union of $F \geqslant 1$ disks.  
Therefore
\[ 
	2g - 2 = -\chi(S) = - i(\alpha,\beta) + 2 i(\alpha,\beta)  - F = i(\alpha,\beta) - F \leqslant i(\alpha,\beta) - 1.
\]
So $i(\alpha,\beta) \geqslant 2g-1$ as required.
\end{proof}

We need Wolpert's inequality \cite{Wolpert} describing change in lengths in terms of the Teichm\"uller distance.

\begin{lemma}[Wolpert, Lemma 3.1 of \cite{Wolpert}] \label{L:Wolpert}
Given $X,Y \in \T(S)$ and a curve $\alpha$ on $S$ we have
\[ 
\ell_Y(\alpha) \leqslant e^{d_\T(X,Y)} \ell_X(\alpha). 
\]
\qed
\end{lemma}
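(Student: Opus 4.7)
The plan is to bound the rate of change of hyperbolic length along a Teichm\"uller geodesic from $X$ to $Y$. Parameterize this geodesic as $X_t$ for $t \in [0,d]$, where $d = d_\T(X,Y)$ with $X_0 = X$ and $X_d = Y$, and set $L(t) = \ell_{X_t}(\alpha)$. The lemma reduces to the infinitesimal bound $L'(t) \leq L(t)$, which integrates to the claimed estimate.

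To establish the infinitesimal bound, I would apply Gardiner's first variation formula for the geodesic length function, which expresses $L'(t)$ as an integral of the real part of the unit-norm holomorphic quadratic differential $q_t$ defining the geodesic, taken along the hyperbolic geodesic representative $\alpha_t^*$ of $\alpha$ on $X_t$. The integrand is pointwise bounded by the flat arclength element along $\alpha_t^*$, and a comparison between the flat metric $|q_t|^{1/2}$ and the hyperbolic metric on $X_t$ converts the resulting estimate into the desired bound $L'(t) \leq L(t)$. The precise constant in this comparison is what makes the factor in the exponent come out to $1$ rather than something larger.

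The main obstacle is the flat-to-hyperbolic comparison, particularly near zeros of $q_t$, where the two conformally equivalent metrics degenerate relative to one another and some care is needed to interpret the variational formula. A more elementary alternative, closer in spirit to Wolpert's original argument, is to work directly with the Teichm\"uller map $f \colon X \to Y$ realizing the distance: apply $f$ to the hyperbolic geodesic representative $\alpha_X^*$ of $\alpha$ on $X$, and bound the length of $f(\alpha_X^*)$ on $Y$ using the $K$-quasiconformal structure of $f$ together with the explicit form of the hyperbolic metric in a collar of $\alpha_X^*$ (the standard collar lemma, of which Lemma \ref{L:collar on short} is a special case). The bound $\ell_Y(\alpha) \leq \mathrm{length}_Y(f(\alpha_X^*))$, which holds because the geodesic representative on $Y$ is length-minimizing, then completes the argument.
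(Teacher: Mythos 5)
The paper does not prove this lemma; the \texttt{\textbackslash qed} immediately after the statement signals that it is being cited from Wolpert's paper, not reproved. So there is no in-paper proof to compare yours against, and the review must simply assess whether your sketch is a correct proof of the statement.

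Your second, ``more elementary'' approach has a genuine gap. You propose to apply the Teichm\"uller map $f\colon X \to Y$ to the geodesic $\alpha_X^*$ and ``bound the length of $f(\alpha_X^*)$ on $Y$ using the $K$-quasiconformal structure of $f$.'' This step cannot work as stated: a $K$-quasiconformal map has no pointwise Lipschitz control whatsoever, so the length of the image curve $f(\alpha_X^*)$ admits no bound in terms of $K$ and $\ell_X(\alpha)$ alone. The reduction $\ell_Y(\alpha) \leqslant \mathrm{length}_Y(f(\alpha_X^*))$ is correct but the right-hand side is not controllable. Wolpert's actual argument sidesteps lengths of image curves entirely: lift $f$ to a $K$-quasiconformal self-map $\tilde f$ of $\H$ conjugating the deck transformation $A$ over $\alpha_X^*$ to the one $B$ over the $Y$-geodesic; then $\tilde f$ descends to a $K$-quasiconformal homeomorphism of the annular covers $\H/\langle A\rangle \to \H/\langle B\rangle$, whose conformal moduli are $\pi/\ell_X(\alpha)$ and $\pi/\ell_Y(\alpha)$; the quasi-invariance of modulus under $K$-quasiconformal maps then gives $\ell_Y(\alpha) \leqslant K\,\ell_X(\alpha)$, from which the statement follows once $K$ is expressed via $d_\T(X,Y)$. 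Your mention of a collar and the collar lemma gestures toward the right object but not the right invariant: what is quasi-invariant is the \emph{modulus} of the annulus, not the arclength of any particular curve in it.

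Your first approach, via Gardiner's variational formula, is a legitimate alternative route in principle, but as you present it the crucial inequality $L'(t) \leqslant L(t)$ (with the right constant, which you yourself flag as unsettled) is simply asserted, not derived, and it is precisely the content of the lemma in infinitesimal form. Establishing it requires the explicit first-variation formula together with a sup-norm estimate on the Poincar\'e theta series associated to $\alpha$, which is a nontrivial computation you have not carried out. As it stands, neither branch of the proposal is a complete proof; the second contains a false step, and the first begs the question.
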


Our upper bound on $\kappa_g$ now follows from the following proposition.
\begin{proposition} \label{P:upper bound}
For $g \geqslant 2$ and all $X,Y \in \T(S_g)$ we have
\[ 
	d_\cC(\short(X),\short(Y)) \leqslant \frac{2}{\log(g-\frac{1}{2})} d_\T(X,Y) + 2.
\]
\end{proposition}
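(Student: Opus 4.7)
Plan: My approach is to partition the Teichm\"uller geodesic between $X$ and $Y$ into short pieces, control the systole at each partition point via a pointwise estimate, and chain the resulting systoles in the curve complex.

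\textbf{Step 1 (pointwise intersection bound).} I first establish that for any $X,Y\in\T(S)$, any $\alpha\in\short(X)$, and any $\beta\in\short(Y)$,
\[
	i(\alpha,\beta) \;<\; e^{2\,d_\T(X,Y)}.
\]
By Lemma~\ref{L:collar on short}, $\beta$ admits an embedded annular collar in $Y$ of width greater than $\ell_Y(\beta)/2$, so each transverse crossing of $\alpha$ with $\beta$ forces $\alpha$ to traverse an annulus of total width exceeding $\ell_Y(\beta)$, whence $\ell_Y(\alpha)>i(\alpha,\beta)\ell_Y(\beta)$. Combining this with the systole inequalities $\ell_X(\alpha)\leq\ell_X(\beta)$ and $\ell_Y(\beta)\leq\ell_Y(\alpha)$ and two applications of Wolpert's inequality (Lemma~\ref{L:Wolpert}) gives
\[
	\ell_Y(\alpha) \leq e^{d_\T(X,Y)}\ell_X(\alpha) \leq e^{d_\T(X,Y)}\ell_X(\beta) \leq e^{2\,d_\T(X,Y)}\ell_Y(\beta),
\]
and dividing by $\ell_Y(\beta)$ produces the intersection bound.

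\textbf{Step 2 (subdivide and chain).} Set $t=\tfrac{1}{2}\log(g-\tfrac{1}{2})$ and partition the Teichm\"uller geodesic from $X$ to $Y$ into $n=\lceil d_\T(X,Y)/t\rceil$ consecutive pieces of length at most $t$, with endpoints $X=X_0,X_1,\dots,X_n=Y$. Picking a systole $\alpha_i\in\short(X_i)$ at each partition point, Step~1 applied to consecutive pairs gives
\[
	i(\alpha_{i-1},\alpha_i) \;<\; e^{2t} \;=\; g-\tfrac{1}{2} \;<\; 2g-1,
\]
so by the contrapositive of Lemma~\ref{L:intersection number bound} the curves $\alpha_{i-1}$ and $\alpha_i$ do not fill $S$; hence there is a simple closed curve disjoint from both, and $d_\cC(\alpha_{i-1},\alpha_i)\leq 2$. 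Chaining these bounds yields $d_\cC(\short(X),\short(Y))\leq 2n$, and substituting the value of $t$ produces an inequality of the shape $\frac{C\,d_\T(X,Y)}{\log(g-\tfrac{1}{2})}+C'$, which has the right functional form.

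\textbf{Main obstacle.} The naive chain above contributes $2$ to the curve complex distance per piece and so yields leading coefficient $4/\log(g-\tfrac{1}{2})$; the proposition asserts the sharper coefficient $2/\log(g-\tfrac{1}{2})$, and extracting this is the delicate point. The key observation for the improvement is that whenever two distinct curves $\alpha,\alpha'$ are simultaneously systoles at some point $X_{t^\ast}$ on the Teichm\"uller geodesic, the collar computation of Lemma~\ref{L:collar on short} forces $\ell_{X_{t^\ast}}(\alpha)>i(\alpha,\alpha')\ell_{X_{t^\ast}}(\alpha')=i(\alpha,\alpha')\ell_{X_{t^\ast}}(\alpha)$, which requires $i(\alpha,\alpha')=0$, so two simultaneous systoles must be disjoint. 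I would therefore refine Step~2 by subdividing at these systole-transition times, so that consecutive systoles along the resulting partition are disjoint and contribute only $1$ to the curve complex distance per piece; combined with Step~1 to control the spacing of transitions in terms of $d_\T(X,Y)$ and $\log(g-\tfrac{1}{2})$, this halves the leading coefficient, with the additive $+2$ absorbing the diameter $\leq 2$ of $\short(X)$ and $\short(Y)$ in $\curvskel{1}(S)$.
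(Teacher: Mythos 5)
Your Step 2 correctly identifies the problem with your own argument: two applications of Wolpert's inequality produce the factor $e^{2d_\T(X,Y)}$, which forces pieces of length $\tfrac12\log(g-\tfrac12)$ and hence the leading coefficient $4/\log(g-\tfrac12)$, a factor of two worse than claimed. The refinement you propose in the ``Main obstacle'' paragraph does not close this gap. The claim that two simultaneous systoles must be disjoint is false: the collar lemma only gives $\ell(\alpha) > i(\alpha,\alpha')\ell(\alpha')/2$ (the collar has radius $>\ell/4$, so each crossing costs $>\ell/2$, not $>\ell$ --- reread the lemma's proof, where $N_{w/2}(\alpha)$ has $w>\ell_X(\alpha)/2$), which with $\ell(\alpha)=\ell(\alpha')$ only forces $i(\alpha,\alpha')\leq 1$. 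Indeed the paper states explicitly in Section~2 that systoles can intersect once. So your transition points would not reduce the per-step cost below $2$, and even if they did, you would still need to control the number of transition times in terms of $d_\T(X,Y)$, which your sketch does not do.

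The paper gets the clean coefficient $2/\log(g-\tfrac12)$ by a simpler device you are missing: a WLOG normalization. Assuming without loss of generality that $\ell_X(\alpha)\leq\ell_Y(\beta)$, one needs Wolpert only once: $\ell_Y(\alpha)\leq e^{d_\T(X,Y)}\ell_X(\alpha)\leq e^{d_\T(X,Y)}\ell_Y(\beta)$, and combining with the collar bound at $Y$ gives $i(\alpha,\beta) < 2e^{d_\T(X,Y)}$. Then pieces of length $\log(g-\tfrac12)$ (not half that) already force $i<2g-1$, hence $d_\cC\leq 2$ per piece, and the chaining gives exactly $\tfrac{2}{\log(g-1/2)}d_\T(X,Y)+2$. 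You should replace the systole-transition idea with this normalization; the rest of your chaining argument then goes through as written.
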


\begin{lemma} \label{E:small distance lipschitz} If $d_\T(X,Y) \leqslant \log\left(g- 1/2\right)$, then $d_\cC(\short(X),\short(Y)) \leqslant 2$. 
\end{lemma}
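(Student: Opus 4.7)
The plan is to argue by contrapositive: I will show that if $\alpha \in \short(X)$ and $\beta \in \short(Y)$ satisfy $d_\cC(\alpha, \beta) \geqslant 3$, then $d_\T(X, Y) > \log(g - 1/2)$. Any two curves at curve-complex distance at most $2$ have a common disjoint curve, so $d_\cC(\alpha, \beta) \geqslant 3$ forces $\alpha$ and $\beta$ to fill $S$. Lemma~\ref{L:intersection number bound} then gives $i(\alpha, \beta) \geqslant 2g - 1$.

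The key step is to apply the collar estimate symmetrically on the two surfaces. By Lemma~\ref{L:collar on short}, $\alpha$ has a collar $N$ on $X$ of width $w > \ell_X(\alpha)/2$. Since $\beta$ is simple, the $i(\alpha, \beta)$ arcs of $\beta \cap N$ (one per intersection point with $\alpha$) are pairwise disjoint, and each crosses the collar and so has length at least $2w > \ell_X(\alpha)$. Summing gives
\[
\ell_X(\beta) > (2g-1)\, \ell_X(\alpha).
\]
The same argument on $Y$, using that $\beta$ is a systole there, yields
\[
\ell_Y(\alpha) > (2g-1)\, \ell_Y(\beta).
\]

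Finally, Wolpert's inequality (Lemma~\ref{L:Wolpert}) gives $\ell_X(\beta) \leqslant e^{d} \ell_Y(\beta)$ and $\ell_Y(\alpha) \leqslant e^{d} \ell_X(\alpha)$, where $d = d_\T(X, Y)$. Multiplying the two collar inequalities and comparing with the product of the two Wolpert bounds cancels the factor $\ell_X(\alpha)\, \ell_Y(\beta)$ and leaves $(2g-1)^2 < e^{2d}$, whence $d > \log(2g-1) \geqslant \log(g - 1/2)$ for $g \geqslant 2$. This contradicts the hypothesis, completing the proof.

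I do not foresee a real obstacle; the only point that needs care is that a one-sided use of the collar estimate, controlling $\ell_Y(\beta)$ only through the systole comparison $\ell_Y(\beta) \leqslant \ell_Y(\alpha)$ and a single application of Wolpert, produces merely $e^{2d} > 2g - 1$, i.e. $d > \tfrac{1}{2}\log(2g - 1)$, which is too weak for large $g$. Doubling up the collar estimate and using Wolpert in both directions squares the multiplicative gain and delivers the bound appearing in the statement.
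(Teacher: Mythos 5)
Your proof is correct but takes a genuinely different route from the paper's. The paper applies the collar estimate only once, to the systole $\beta$ of $Y$: it shows $\ell_Y(\alpha) > i(\alpha,\beta)\,\ell_Y(\beta)/2$, breaks the symmetry with the normalization $\ell_X(\alpha) \leqslant \ell_Y(\beta)$, and then one application of Wolpert yields $i(\alpha,\beta) < 2g-1$, ruling out filling by Lemma~\ref{L:intersection number bound}. You instead argue by contrapositive, apply the collar estimate on \emph{both} surfaces, and use Wolpert in both directions so that the lengths cancel upon multiplication. This avoids the WLOG normalization and is arguably cleaner and more symmetric; the paper's one-sided version is a bit more economical.

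One detail deserves a flag, although it does not break your argument. You read Lemma~\ref{L:collar on short} literally against the paper's stated definition of ``width'' (an $r$-neighborhood), and so conclude each transverse arc has length at least $2w > \ell_X(\alpha)$. But the proof of Lemma~\ref{L:collar on short} actually produces the embedded annulus $N_{w/2}(\alpha)$ with $w > \ell_X(\alpha)/2$, so the guaranteed arc length is only $> \ell_X(\alpha)/2$; this is the reading the paper itself uses in the displayed inequality $\tfrac{i(\alpha,\beta)\ell_Y(\beta)}{2} < \ell_Y(\alpha)$. Feeding the weaker estimate into your two-sided argument gives $e^{2d} > (2g-1)^2/4$, i.e.\ $d > \log(g-1/2)$, which is exactly the stated threshold; the literal reading gives $d > \log(2g-1)$. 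Either way the lemma follows, but you should be aware that the sharper constant you report rests on the stronger reading of the collar lemma, which its proof does not quite deliver.
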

\begin{proof}
Suppose that $d_\T(X,Y) \leqslant \log(g-1/2)$.  
Write $\alpha = \short(X)$ and $\beta = \short(Y)$, and, without loss of generality, assume that
\[ 
	\ell_X(\alpha) \leqslant \ell_Y(\beta). 
\]
According to Lemma \ref{L:collar on short}, we have
\[ 
	\frac{i(\alpha,\beta) \ell_Y(\beta)}{2} < \ell_Y(\alpha).
\]
On the other hand, Lemma \ref{L:Wolpert} implies that
\[ 
	\ell_Y(\alpha) \leqslant e^{\log(g-1/2)} \ell_X(\alpha) = (g-1/2)\ell_X(\alpha) = \frac{(2g-1)}{2} \ell_X(\alpha).
\]
Combining these two inequalities yields
\[ 
i(\alpha,\beta) < \frac{2 \ell_Y(\alpha)}{\ell_Y(\beta)} \leqslant \frac{(2g-1) \ell_X(\alpha)}{\ell_Y(\beta)} \leqslant 2g-1.
\]
By Lemma \ref{L:intersection number bound}, $\alpha$ and $\beta$ cannot fill the surface $S$, and hence
\[ 
d_\cC(\short(X),\short(Y)) = d_\cC(\alpha,\beta) \leqslant 2.
\]
This proves the claim.
\end{proof}

\begin{proof}[Proof of Proposition~\ref{P:upper bound}]
Now, given any two points $X$ and $Y$ in $\T(S)$, let $n$ be the nonnegative integer such that
\[ 
n \log(g-1/2) \leqslant d_\T(X,Y) < (n+1)\log(g-1/2).
\]
Let $X = X_0,\ldots,X_{n+1} = Y$ be a chain in $\T(S)$ with
\[ 
d_\T(X_{k-1},X_k) \leqslant \log(g-1/2)
\]
for each $1 \leqslant k \leqslant n+1$. 
By the triangle inequality and (\ref{E:small distance lipschitz}), we have
\begin{align*}
d_\cC(\short(X),\short(Y)) & \leqslant  \sum_{k=1}^{n+1} d_\cC(\short(X_{k-1}),\short(X_k))\\
 & \leqslant  2 (n+1) \\
 &\leqslant \frac{2}{\log(g-1/2)} d_\T(X,Y) + 2
\end{align*}
as required.
\end{proof}

\section{pseudo-Anosov maps}

Given a pseudo-Anosov homeomorphism $f:S \to S$, we let $\lambda(f)$ denote the dilatation of $f$.  
We recall a few facts about pseudo-Anosov homeomorphisms, and refer the reader to the listed references for more detailed discussions.

\subsection{Asymptotic translation length.}  Given a homeomorphism $f:S \to S$, the asymptotic translation length of $f$ on $\curvskel{1}(S)$ is defined by
\[
\ell_{\cC}(f)= \liminf _{j\rightarrow \infty} \frac{d_\cC(\alpha, f^j(\alpha))}{j},
\]
where $\alpha$ is any simple closed curve.  
This is easily seen to be independent of $\alpha$.
When $f$ is pseudo-Anosov, Masur and Minsky proved $f$ has a quasi-invariant geodesic axis, and so this limit infimum is in fact a limit.  
Moreover, there is a $C > 0$ depending only on the genus of $S$ such that $\ell_\cC(f) \geqslant C$, see \cite{Mas-Min} or Corollary of 1.5 \cite{Bow}. It follows from the definition that $\ell_{\cC}(f^k) = k \ell_{\cC}(f)$. 

One can similarly define the asymptotic translation length of $f:S \to S$ acting on $\T(S)$.  
A pseudo-Anosov $f$ has an axis in $\T(S)$ (see \cite{Bers}), and the asymptotic translation length is just the translation length $\ell_\T(f)$.  
In fact, Bers' proof of Thurston's classification theorem shows that
\[ 
	\ell_\T(f) = \log(\lambda(f)).
\]

The following lemma allows us to use asymptotic translation lengths to bound optimal Lipschitz constants.

\begin{lemma} \label{L:comparing asymptotics}
For any pseudo-Anosov $f:S_g \to S_g$ we have
\[ 
	\kappa_g \geqslant \frac{\ell_\cC(f)}{\log(\lambda(f))}.
\]
\end{lemma}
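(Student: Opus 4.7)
The plan is to exploit the mapping-class-group equivariance of $\short$ together with the Teichm\"uller axis of $f$, turning any coarse Lipschitz bound for $\short$ into a linear comparison between the two asymptotic translation lengths.

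First, I would fix $K,C > 0$ such that $\short$ is $(K,C)$--coarsely Lipschitz; by definition of $\kappa_g$, it suffices to show $K \geqslant \ell_\cC(f)/\log(\lambda(f))$, and then take the infimum over admissible $K$. The next step is to observe that $\short$ is essentially $\mcg(S)$--equivariant: since any $h \in \mcg(S)$ acts on $\T(S)$ by isometries that preserve hyperbolic length of curves, it sends systoles to systoles, so $\short(h \cdot X) = h \cdot \short(X)$ as subsets of $\curvskel{0}(S)$. In particular, for any pseudo-Anosov $f$, any integer $j$, and any $\alpha \in \short(X)$ we have $f^j(\alpha) \in \short(f^j(X))$, so
\[
d_\cC\bigl(\short(X),\short(f^j(X))\bigr) = d_\cC(\alpha, f^j(\alpha))
\]
(up to the diameter--two ambiguity, which is harmless in the limit to come).

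Next, I would pick $X$ on the Teichm\"uller axis of $f$, which exists by Bers' theorem, so that $d_\T(X, f^j(X)) = j \log(\lambda(f))$ exactly (alternatively, one could use any $X$ and quote $\ell_\T(f) = \log(\lambda(f))$ together with the inequality $d_\T(X,f^j(X)) \leqslant j\log(\lambda(f)) + \mathrm{const}$, which would work equally well). Applying the coarse Lipschitz inequality to $X$ and $f^j(X)$ then gives
\[
d_\cC(\alpha, f^j(\alpha)) \;\leqslant\; K\, j \log(\lambda(f)) + C.
\]

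Finally, I would divide by $j$ and pass to the limit $j \to \infty$. The left side tends to $\ell_\cC(f)$ by the definition of asymptotic translation length on $\curvskel{1}(S)$ (which, as the excerpt notes, is a genuine limit for pseudo-Anosov $f$), while the additive constant $C/j$ disappears. This yields $\ell_\cC(f) \leqslant K \log(\lambda(f))$, and hence $K \geqslant \ell_\cC(f)/\log(\lambda(f))$. Taking the infimum over all such $K$ completes the proof. There is no real obstacle here; the one thing that requires a moment's care is that the ambiguity in $\short$ (a set of diameter at most $2$) does not affect the limit, which is immediate since it contributes a bounded additive error that is swallowed by the division by $j$.
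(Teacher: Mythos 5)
Your proof is correct and essentially the same as the paper's: both rely on the $\mcg$--equivariance of $\short$, the identification $\ell_\T(f) = \log(\lambda(f))$, and passing to the limit $j \to \infty$ along the orbit $\{f^j(X)\}$ so that the additive constant $C$ and the diameter--two ambiguity vanish. The paper is merely more compressed, writing the whole argument as a single chain of (in)equalities without explicitly invoking the axis.
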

\begin{proof}
If $K,C > 0$ are such that $\short$ is $(K,C)$--coarsely Lipschitz, then, for any $X$ in $\T(S)$, we have
\begin{align*}
\frac{\ell_\cC(f)}{\log(\lambda(f))}
	& =  \lim_{j \to \infty}\frac{d_\cC(\short(X),f^j(\short(X)))}{d_\T(X,f^j(X))}\\
	& =   \lim_{j \to \infty}\frac{d_\cC(\short(X),\short(f^j(X)))}{d_\T(X,f^j(X))}\\
	& \leqslant  \lim_{j \to \infty} \frac{Kd_\T(X,f^j(X))+C}{d_\T(X,f^j(X))}\\
	& \leqslant K. 
\end{align*}
Since $\kappa_g$ is the infimum of these $K$, the lemma is proven.
\end{proof}

\subsection{Invariant train tracks for pseudo-Anosov maps.}

For more on train tracks, we refer the reader to \cite{Pen-Har}, whose notation we adopt.

Given a pseudo-Anosov map $f:S \to S$, let $\tau$ denote an invariant train track. 
So $\tau$ carries $f(\tau)$, written $f(\tau) \prec \tau$, and a carrying map sends vertices of $f(\tau)$ to vertices of $\tau$.
Let $P_\tau$ denote the polyhedron of measures on $\tau$, viewed either as the space of weights on the branches $B$ of $\tau$ satisfying the switch conditions (a cone in $\mathbb R_{\geqslant 0}^B$), or a subset of the space $\ml(S)$ of measured laminations on $S$.

Although the carrying map is not unique, $f$ induces a canonical linear inclusion $f_*:P_\tau \subset P_\tau$.  
There is a unique eigenray in $P_\tau$ spanned by the stable lamination, and the corresponding eigenvalue is the dilatation $\lambda(f)$.  
In fact, this is the unique eigenray in all of $\mathbb R_{\geqslant 0}^B$ with eigenvalue greater than one.

\begin{theorem}
If $\tau$ is an invariant train track for a pseudo-Anosov homeomorphism $f:S \to S$ with transition matrix $A$, then $\lambda(f)$ is the spectral radius of $A$.
\qed
\end{theorem}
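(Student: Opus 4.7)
The statement asserts an equality of two numbers that are already essentially on the table: one is declared to be an eigenvalue of $A$ with a specific eigenvector (the stable lamination), and the other is the spectral radius $\rho(A)$. So the plan is to prove $\lambda(f)=\rho(A)$ by a two-sided inequality, using the uniqueness statement about eigenrays asserted immediately before the theorem together with the Perron--Frobenius theorem applied to the nonnegative matrix $A$.

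First I would observe that, because the carrying map sends branches of $f(\tau)$ linearly into sums of branches of $\tau$, the transition matrix $A$ has entries in $\mathbb{Z}_{\geqslant 0}$. The preceding discussion then gives me the lower bound essentially for free: the stable lamination of $f$ is a vector $v_s \in P_\tau \subset \mathbb{R}_{\geqslant 0}^B$ with $A v_s = \lambda(f) v_s$, and since the spectral radius of any square matrix is at least as large as the modulus of any eigenvalue, I conclude $\rho(A) \geqslant \lambda(f)$.

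For the reverse inequality, I would apply the Perron--Frobenius theorem for nonnegative matrices: since $A$ has nonnegative entries, the spectral radius $\rho(A)$ is itself an eigenvalue of $A$, and it admits a nonnegative eigenvector $v \in \mathbb{R}_{\geqslant 0}^B$. Because $\lambda(f)>1$ (pseudo-Anosov dilatations exceed $1$), the previous paragraph gives $\rho(A) \geqslant \lambda(f) > 1$. Now I invoke the uniqueness statement made just before the theorem: there is exactly one eigenray in $\mathbb{R}_{\geqslant 0}^B$ whose eigenvalue exceeds $1$, and it is the one spanned by the stable lamination, with eigenvalue $\lambda(f)$. Since $v$ spans such an eigenray with eigenvalue $\rho(A) > 1$, this forces $v$ to be a positive multiple of $v_s$ and $\rho(A) = \lambda(f)$, completing the proof.

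I do not expect any real obstacle: the only nontrivial input is Perron--Frobenius (applied in its weakest form, without any primitivity or irreducibility assumption on $A$), and the uniqueness of the expanding eigenray, which has been stated as background. The argument is a clean two-line sandwich once those ingredients are in place.
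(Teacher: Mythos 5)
The paper gives no proof of this theorem at all: it is stated with an immediate \verb|\qed|, treated as a standard fact following from the preceding paragraph about the unique expanding eigenray (and implicitly from \cite{Pen-Har}). So there is no argument in the paper to compare yours against. Your blind proof is, however, correct and self-contained given that background: the lower bound $\rho(A)\geqslant\lambda(f)$ follows from the stable lamination being a nonnegative eigenvector with eigenvalue $\lambda(f)$, and the upper bound follows from the weak Perron--Frobenius theorem (yielding a nonnegative eigenvector for $\rho(A)$) combined with the paper's assertion that the expanding eigenray in $\mathbb{R}_{\geqslant 0}^B$ is unique. You were right to be careful that the uniqueness is claimed over all of $\mathbb{R}_{\geqslant 0}^B$ rather than just $P_\tau$ --- that is exactly what makes the argument close, since the Perron--Frobenius eigenvector need not satisfy the switch conditions --- and right that no irreducibility hypothesis on $A$ is needed. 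This is a clean and valid reconstruction of the omitted proof.
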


The dilatation $\lambda(f)$ is also the spectral radius of the matrix that defines the map
\[
	\mathbb R_{\geqslant 0}^B  \to \mathbb R_{\geqslant 0}^B,
\]
induced by $f$.
Furthermore, given any $f$--invariant subspace $V$ of $P_\tau$, the dilatation is the spectral radius of the matrix (with respect to any basis) defining the map $V \to V$ induced by $f$.  
If the matrix is a nonnegative integral matrix $A$, there is an associated directed graph, a {\em digraph}, with vertices the basis vectors, and $A_{ij}$ edges from the $i^{\mathrm{th}}$ basis vector to the $j^{\mathrm{th}}$ basis vector.  

\subsection{Basic Nesting Lemma and lower bound for asymptotic translation length.} 
A maximal train track $\tau$ is {\em recurrent} if there is some $\mu$ in $P_\tau$ that has positive weights on every branch. The set of such $\mu$ will be denoted  $\text{int}(P_\tau)$. A maximal train track $\tau$ is {\em transversely recurrent} if every branch intersects some 
closed curve that intersects $\tau$ efficiently. A train track that is both recurrent and transversely recurrent is called birecurrent.

For a maximal train track $\tau$, Masur and Minsky observed that if $\alpha$ is a curve in $\text{int}(P_\tau)$ and a curve $\beta$ is disjoint from $\alpha$, then $\beta$ is in $P_\tau$, see  Observation 4.1 of \cite{Mas-Min}. 
From this they deduce the following proposition.

\begin{proposition} \label{P:nested train track}
If $\tau$ is a maximal birecurrent invariant train track for a pseudo-Anosov $f:S \to S$ and $r \geqslant 1$ is such that $f^r(P_\tau) \subset \text{int}(P_\tau)$, then \[
\ell_\cC(f) \geqslant 1/r.
\]
\qed
\end{proposition}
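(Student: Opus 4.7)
The plan is to exploit the Masur--Minsky Observation together with the nesting hypothesis $f^r(P_\tau) \subset \text{int}(P_\tau)$ inductively, showing that geodesic chains in $\curvskel{1}(S)$ starting at a curve $\alpha \in \text{int}(P_\tau)$ cannot stray too far from $P_\tau$ under iteration by $f$. This forces the orbit of $\alpha$ under $f$ to escape linearly from any fixed curve not carried by $\tau$, and hence linearly from $\alpha$ itself.

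The first step is to prove the following claim by induction on $n \geqslant 1$: if $\alpha$ is a simple closed curve in $\text{int}(P_\tau)$ and $\beta$ is a simple closed curve with $d_\cC(\alpha,\beta) \leqslant n$, then $f^{(n-1)r}(\beta) \in P_\tau$. The base case $n = 1$ is exactly the cited Observation of \cite{Mas-Min}. For the inductive step, given a chain $\alpha = \alpha_0, \alpha_1, \ldots, \alpha_n = \beta$ realizing the distance, the inductive hypothesis applied to the initial subchain yields $f^{(n-2)r}(\alpha_{n-1}) \in P_\tau$, so $f^{(n-1)r}(\alpha_{n-1}) \in f^r(P_\tau) \subset \text{int}(P_\tau)$. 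Since $f^{(n-1)r}(\alpha_n)$ is a simple closed curve disjoint from $f^{(n-1)r}(\alpha_{n-1})$, a second application of the Observation with the new basepoint $f^{(n-1)r}(\alpha_{n-1}) \in \text{int}(P_\tau)$ yields $f^{(n-1)r}(\alpha_n) \in P_\tau$.

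Next, I would fix a simple closed curve $\alpha \in \text{int}(P_\tau)$---available because $\tau$ is maximal and recurrent---and a simple closed curve $\gamma \notin P_\tau$---available because a finite train track cannot carry every curve on $S$. Because $f(P_\tau) \subset P_\tau$, the iterate $f^{-m}(\gamma)$ lies outside $P_\tau$ for every $m \geqslant 0$: otherwise $\gamma = f^m(f^{-m}(\gamma))$ would be in $P_\tau$. Applying the contrapositive of the claim with $\beta = f^{-k}(\gamma)$: whenever $n \leqslant k/r + 1$ we have $(n-1)r - k \leqslant 0$, so $f^{(n-1)r}(\beta) = f^{(n-1)r - k}(\gamma)$ lies outside $P_\tau$, which forces $d_\cC(\alpha, f^{-k}(\gamma)) > n$. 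Setting $n = \lfloor k/r + 1 \rfloor$ gives $d_\cC(\alpha, f^{-k}(\gamma)) > k/r$.

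The last step uses that the mapping class group acts by isometries on $\curvskel{1}(S)$, so $d_\cC(\alpha, f^{-k}(\gamma)) = d_\cC(f^k(\alpha), \gamma)$. The triangle inequality then yields
\[
d_\cC(\alpha, f^k(\alpha)) \geqslant d_\cC(f^k(\alpha), \gamma) - d_\cC(\alpha, \gamma) > \frac{k}{r} - d_\cC(\alpha, \gamma),
\]
and dividing by $k$ and letting $k \to \infty$ gives $\ell_\cC(f) \geqslant 1/r$. The main subtlety I expect is keeping the direction of the nesting consistent in the induction---if the signs of the exponents are reversed, the contrapositive becomes vacuous---but once the claim is set up correctly, the rest is a mechanical combination of the Observation, the hypothesis $f^r(P_\tau) \subset \text{int}(P_\tau)$, and the forward-invariance $f(P_\tau) \subset P_\tau$.
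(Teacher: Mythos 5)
Your proof is correct and is precisely the Masur--Minsky nesting argument that the paper is implicitly invoking: the paper states this proposition with a \qed and attributes it to Masur and Minsky as a deduction from their Observation 4.1, so there is no in-paper proof to diverge from. Your inductive claim (the statement that $d_\cC(\alpha,\beta)\leqslant n$ and $\alpha\in\mathrm{int}(P_\tau)$ force $f^{(n-1)r}(\beta)\in P_\tau$) is a faithful reformulation of the nesting lemma, and the bookkeeping with the contrapositive, the choice of $n$, the equivariance $d_\cC(\alpha,f^{-k}\gamma)=d_\cC(f^k\alpha,\gamma)$, and the passage to the liminf are all sound. Two small points worth tightening if this were written up: the existence of a \emph{simple closed curve} $\alpha\in\mathrm{int}(P_\tau)$ is not quite immediate from recurrence alone (recurrence gives a positive measure, a priori an irrational lamination or a multicurve); the cleanest route is to take any vertex cycle $\gamma_0\in P_\tau$ and set $\alpha=f^r(\gamma_0)\in f^r(P_\tau)\subset\mathrm{int}(P_\tau)$, which uses the hypothesis directly. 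Similarly, the existence of a curve $\gamma\notin P_\tau$ is best justified by noting that $P_\tau$ is a proper closed cone in $\mathcal{ML}(S)$ and simple closed curves are projectively dense, or by appealing to transverse recurrence, which the paper's hypotheses supply. Neither issue affects the correctness of the argument.
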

We call an $r$ satisfying the conditions of Proposition~\ref{P:nested train track}
a {\em mixing number} for $f$ and $\tau$. 
In the next section, we construct a family of pseudo-Anosov maps $\phi_g: S_g \to S_g$ and maximal birecurrent invariant train tracks $\tau_g$ with mixing numbers $2g-1$. 

\section{Lower bound on $\kappa_g$.}

We build a family of pseudo-Anosov maps $\{ \phi_g:S_g \to S_g \}$ for which the asymptotic translation lengths on $\T(S_g)$ are on the order of $\log g/ g$ while the asymptotic translation lengths on $\curvskel{1}(S_g)$ are bounded below by a linear function of $g$.  
The lower bound on $\kappa_g$ in Theorem \ref{T:main} follows from this and Lemma \ref{L:comparing asymptotics}.
Our construction is similar to Penner's \cite{Pen}, but the asymptotic behavior is different.

\begin{figure}[htb]
\begin{center}
\includegraphics[height=12cm]{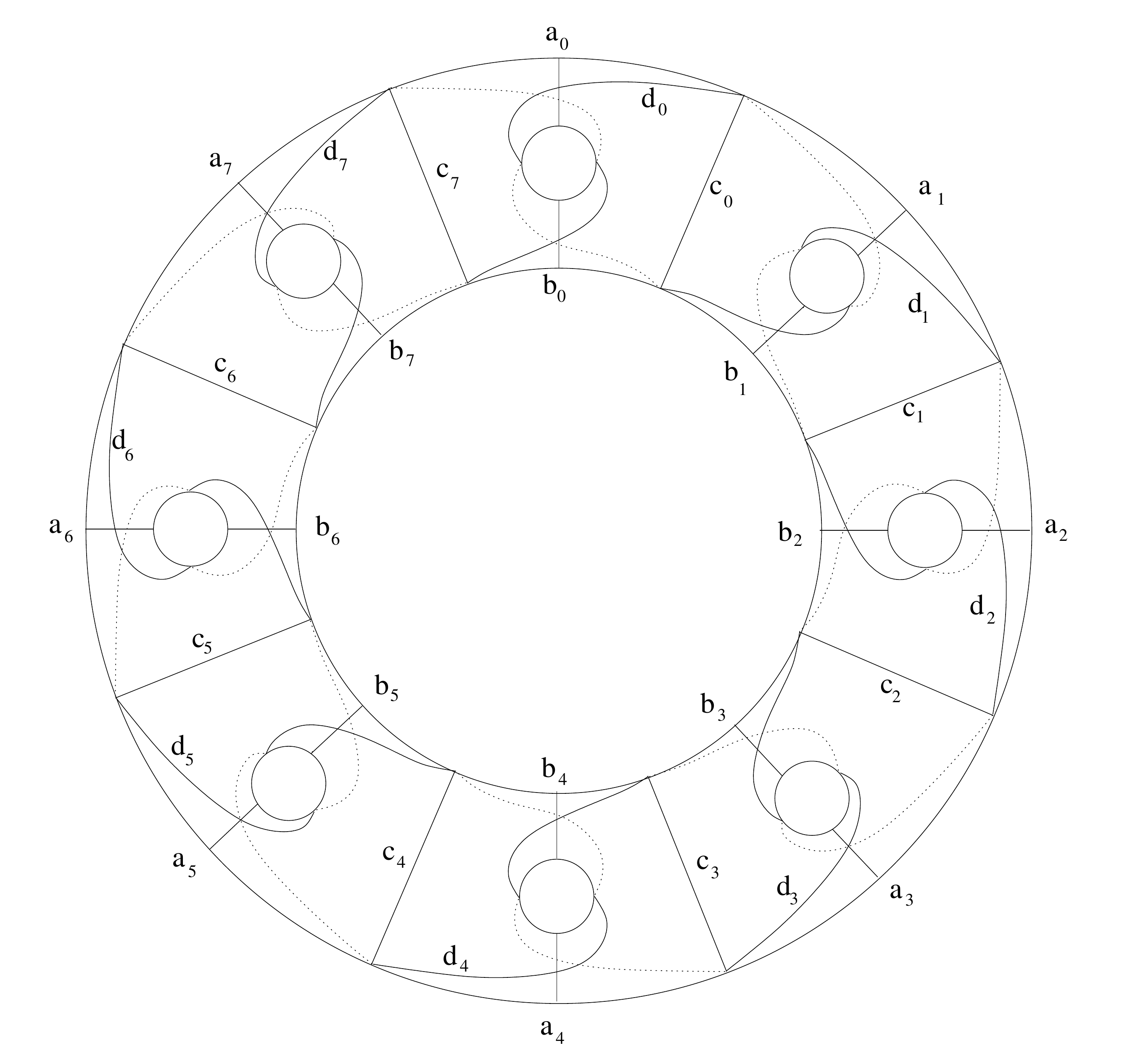} \label{F:genus9lip1}\caption{The pseudo-Anosov $\phi_9$}
\end{center}
\end{figure}

Let $g \geqslant 4$ and consider the genus $g$ surface $S = S_g$ with curves
\[ 
\Omega = \Omega_g = \{ a_0,\ldots,a_{g-2}, b_0,\ldots, b_{g-2}, c_0,\ldots,c_{g-2}, d_0,\ldots,d_{g-2} \}
\]
as indicated in Figure \ref{F:genus9lip1} when $g = 9$.  
For a curve $x$ in $\Omega$, let $T_x$ be the left--handed Dehn twist in $x$.
Let $\rho= \rho_g$ be the symmetry of order $g-1$ obtained by rotating $S_g$ clockwise by $2\pi/(g-1)$, and let
\[ 
	\phi = \phi_g = \rho_g \circ T_{a_0} \circ T_{b_1} \circ T_{c_0} \circ T_{d_0}^{-1} .
\]

Observe that the only nonzero intersection numbers among curves in $\Omega$ are
\[ 
i(d_j,a_j) = i(d_j,a_{j+1}) = i(d_j,b_j) = i(d_j,b_{j+1}) = 1 \mbox{\ and \ } i(d_j,c_j) = 2
\]
for $j \in \{0,\ldots,g-2\}$, where indices are taken modulo $g-1$.  
Smoothing intersection points as indicated in Figure \ref{F:smoothing}, we produce a maximal train track $\tau = \tau_g$.  
Each of the curves in $\Omega$ is carried by $\tau$, proving that $\tau$ is recurrent, and these curves are elements of $P_\tau$.  
Moreover, each of the curves can be pushed off $\tau$ to meet it efficiently, proving that $\tau$ is transversely recurrent.  
Let $P_\Omega \subset P_\tau$ be the subspace of measures carried by $\tau$ that lie in the span of $\Omega$.  
Because no two curves of $\Omega$ put nonzero weights on the same set of branches, the set $\Omega$ is a basis for $P_\Omega$.

\begin{figure}[htb]
\begin{center}
\includegraphics[height=3cm]{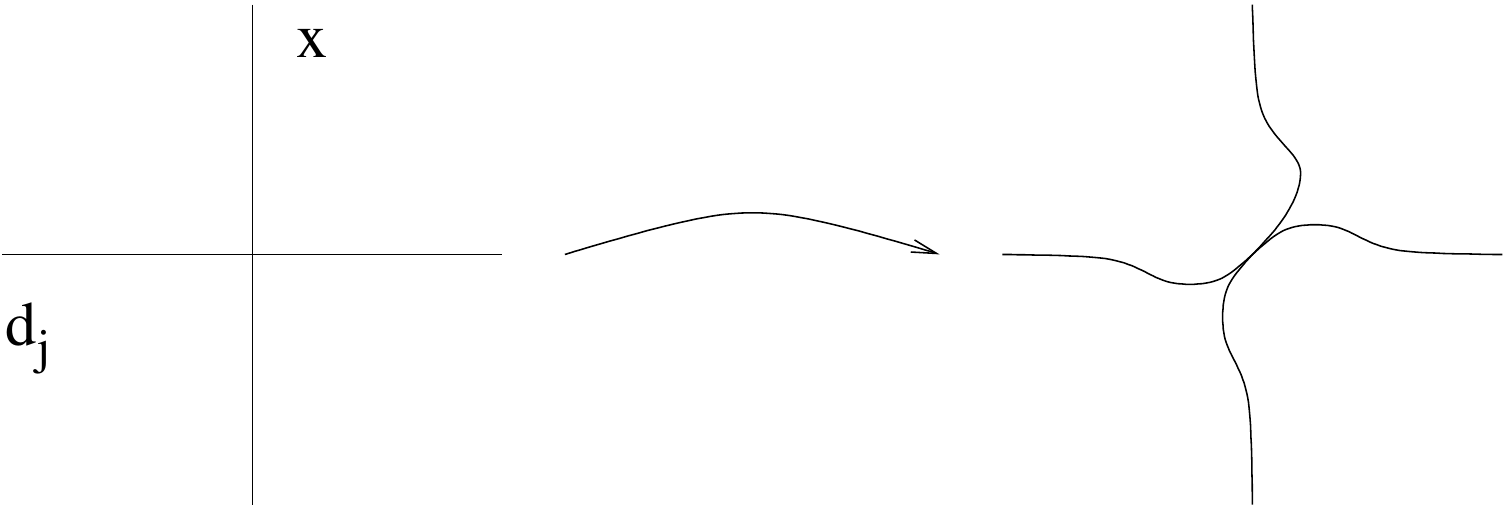} 
\caption{Smoothing the intersection points.  Here $x$ is some $a_i, b_i,$ or $c_i$.}\label{F:smoothing}
\end{center}
\end{figure}

Since $\Omega$ is $\rho$--invariant, we may assume that $\tau$ is.  
Furthermore, one has that $T_{a_j}(\tau)$, $T_{b_j}(\tau)$, $T_{c_j}(\tau)$, and $T_{d_j}^{-1}(\tau)$ are carried by $\tau$ for any $j$, as in \cite{PenConst}.  
In fact, we have $f(P_\Omega) \subset P_\Omega$ for any $f$ in 
$\{\rho, T_{d_j}^{-1}, T_{a_j}, T_{b_j}, T_{c_j} \ | \ 0 \leqslant j \leqslant g-1\}$.  
It follows that $\phi(P_\Omega) \subset P_\Omega$ and, as in \cite{Pen}, $\phi$ is pseudo-Anosov.  
Let $A$ denote the matrix for the action of $\phi$ on $P_\Omega$ in terms of the basis $\Omega$.  
This is a Perron--Frobenius matrix whose associated digraph $G_g$ is shown in Figure \ref{F:genus9lip2} in the case $g = 9$.  
The vertices are labeled by the corresponding elements of $\Omega$, and multiple edges are represented by an edge labeled with the multiplicity.
An important feature is that $G$ has exactly one self--loop, at the vertex $a_1$.

\begin{figure}[htb]
\begin{center}
\includegraphics[height=11.5cm]{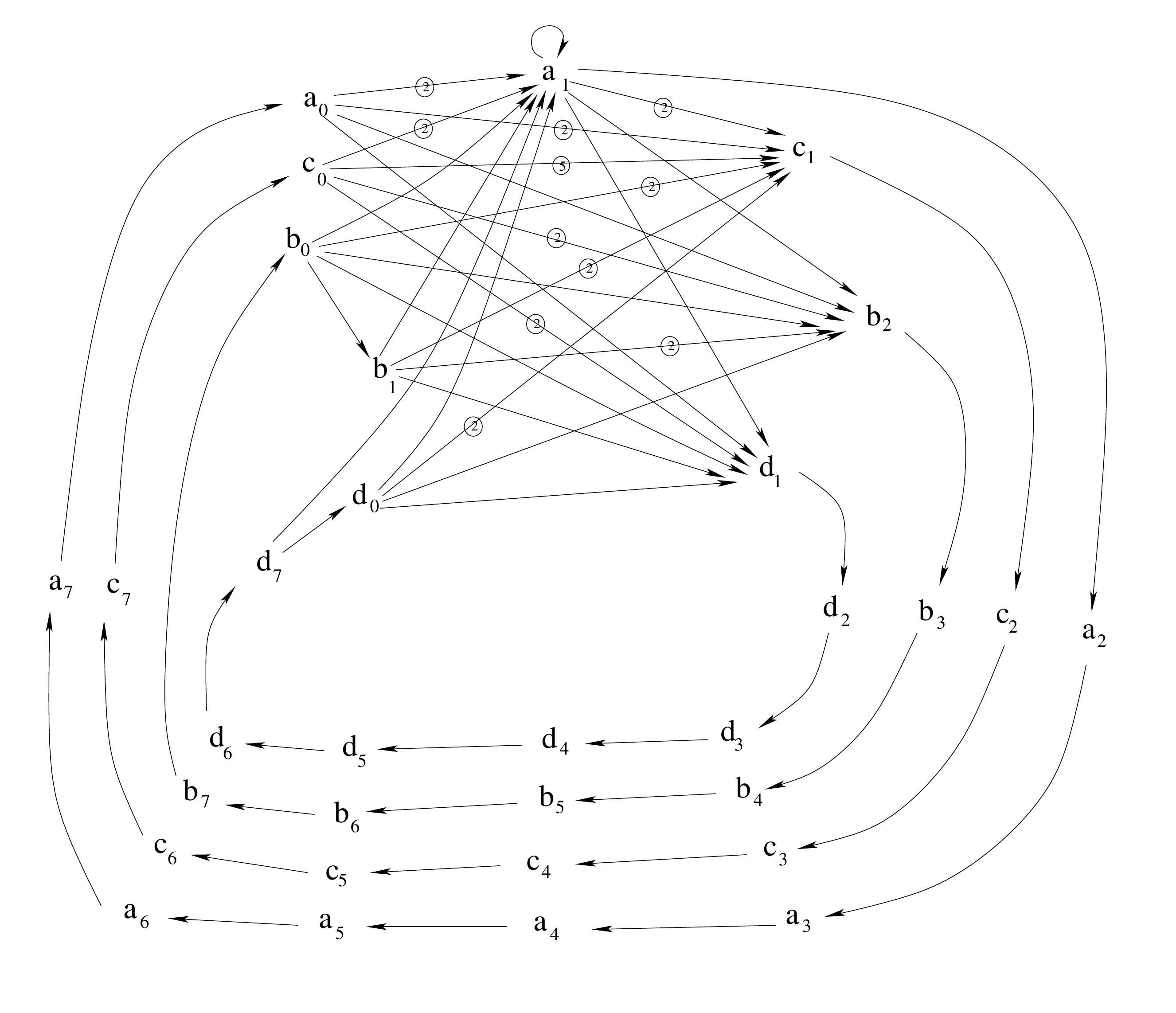}
\caption{The digraph $G_9$.}
\label{F:genus9lip2}
\end{center}
\end{figure}

First we bound the translation length on $\curvskel{1}(S)$ from below.

\begin{proposition} \label{P:cc calc}
For every $g \geqslant 4$, 
\[
\ell_\cC(\phi_g) \geqslant \frac{1}{2g-1}.
\]
\end{proposition}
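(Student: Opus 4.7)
The plan is to verify that $2g-1$ is a mixing number for $\phi_g$ and $\tau_g$ in the sense of Proposition \ref{P:nested train track}, from which the desired bound is immediate. Concretely, I must show that for every nonzero $\mu \in P_{\tau_g}$ the iterate $\phi_g^{2g-1}(\mu)$ assigns positive weight to every branch of $\tau_g$, i.e.\ $\phi_g^{2g-1}(P_{\tau_g}) \subset \text{int}(P_{\tau_g})$.

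First I would record a combinatorial feature of the construction: since $\tau_g$ is obtained by smoothing the transverse intersections of the curves in $\Omega$ (Figure \ref{F:smoothing}), every branch of $\tau_g$ lies along at least one curve of $\Omega$. This reduces positivity of branch weights to positivity of the coefficients of the $\Omega$-curves in the linear expansion of $\phi_g^{2g-1}(\mu)$, which we can then bootstrap from $P_\Omega$ to all of $P_{\tau_g}$ by comparing $\Omega$-weights with branch weights.

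Next I would analyze the digraph $G_g$ of Figure \ref{F:genus9lip2} to show that the transition matrix $A$ of $\phi_g$ on $P_\Omega$ satisfies $A^{2g-1} > 0$ entrywise. Since $G_g$ has a unique self-loop at $a_1$, any directed path through $a_1$ can be lengthened arbitrarily, so it suffices to verify
\[
\max_{v} d_G(v, a_1) + \max_{w} d_G(a_1, w) \leqslant 2g-1,
\]
where $d_G$ denotes directed distance in $G_g$. The $\rho$-equivariance of the picture, which cyclically permutes each of the four orbits $\{a_j\}, \{b_j\}, \{c_j\}, \{d_j\}$ of length $g-1$, reduces each of these two maxima to an inspection of finitely many representative vertices. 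Given the structure of $\phi_g$ as $\rho$ composed with a bounded word of twists, each maximum is on the order of $g$, and the sum is at most $2g-1$.

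The main obstacle is the passage from positivity of $A^{2g-1}$ on $P_\Omega$ to positivity on every branch of $\tau_g$ for an arbitrary $\mu \in P_{\tau_g}$, rather than merely $\mu \in P_\Omega$. This requires showing that the linear action of $\phi_g$ on the full branch-weight space pushes any nonnegative weight system onto every $\Omega$-curve after $2g-1$ iterates, which will combine the branch-coverage observation with the explicit smoothing of Figure \ref{F:smoothing}. A secondary, more routine obstacle is the exact distance computation in $G_g$, which must be carried out carefully around the rotational symmetry to confirm that the sharp bound is $2g-1$.
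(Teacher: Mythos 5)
Your overall strategy --- exhibiting $2g-1$ as a mixing number and invoking Proposition~\ref{P:nested train track} --- matches the paper's, but the execution you sketch has a genuine gap that you yourself flag but do not close: the passage from $P_\Omega$ to all of $P_\tau$. Proving $A^{2g-1} > 0$ entrywise (even granting the digraph distance estimates) only handles weight vectors $\mu$ that already lie in the $\Omega$--span, and a general $\mu \in P_{\tau_g}$ need not. Your proposed ``bootstrap \dots by comparing $\Omega$--weights with branch weights'' is not a mechanism; nothing in your outline explains how an arbitrary nonnegative branch--weight system acquires positive $\Omega$--coefficients under iteration.

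The paper's proof handles this with a different decomposition of the $2g-1$ iterates that never passes through entrywise positivity of a $(4g-4)\times(4g-4)$ matrix. Step one is an intersection--number argument tailored to arbitrary $\mu \in P_\tau$: since the $a_j$ and $d_j$ can be pushed off $\tau$ to meet it efficiently with their union crossing every branch, any nonzero $\mu$ has $i(\mu,a_j)>0$ or $i(\mu,d_j)>0$ for some $j$; applying $\rho^{g-1-j}$ moves the index to $0$, and one more application of $\phi$ (via the explicit twist formula, using $i(d_0,a_0)=1$) produces a decomposition $\phi^s(\mu) = t\,a_1 + \mu'$ with $t>0$, $\mu'\in P_\tau$, and $s\leqslant g$. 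Step two is the digraph fact that for every $k\geqslant g-1$ there is a directed path of length exactly $k$ from $a_1$ to every vertex (here the self--loop at $a_1$ is used, roughly as you intend), giving $\phi^k(a_1)\in\mathrm{int}(P_\tau)$. Combining, $\phi^{2g-1}(\mu) = t\,\phi^{2g-1-s}(a_1) + \phi^{2g-1-s}(\mu') \in \mathrm{int}(P_\tau)+P_\tau \subset \mathrm{int}(P_\tau)$. To repair your outline you would need to replace the vague bootstrap with something like this intersection--number step; once that is in place you also do not need $A^{2g-1}>0$ at all, only reachability from $a_1$ at all lengths $\geqslant g-1$, which is a weaker and easier digraph statement.
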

\begin{proof}
By Proposition \ref{P:nested train track}, it is enough to show that $r = 2g-1$ is a mixing number for $\phi$ and $\tau$. 
We show this in two steps. 

We first show that, for any $\mu \in P_\tau$, there is an $s \leqslant g$ so that $\phi^s(\mu) = t a_1 + \mu'$ for some $t > 0$ and $\mu' \in P_\tau$.
Observe that $\mu$ has positive intersection number with some curve $a_j$ or $d_j$.  
Indeed, if we push all of the $a_j$ and $d_j$ off of $\tau$ in both directions so as to meet it efficiently, then the union of these curves intersects every branch.  Next, set $s_0 = g-1 - j$, so that $1 \leqslant s_0 \leqslant g-1$.  Then $\mu_{s_0} = \phi^{s_0}(\mu)$ has positive intersection number with either $a_0$ or $d_0$.  From this we have
\begin{align*}
 T_{a_0} T_{d_0}^{-1}(\mu_{s_0}) 
 	& =   \mu_{s_0} + i(\mu_{s_0},d_0) d_0 + i(\mu_{s_0} + i(\mu_{s_0},d_0)d_0, a_0)a_0\\
 	& =  \mu_{s_0} + i(\mu_{s_0},d_0)d_0 + \left( i(\mu_{s_0},a_0) + i(\mu_{s_0},d_0)i(d_0,a_0) \right) a_0\\
 	& =  \mu_{s_0} + i(\mu_{s_0},d_0)d_0 + \left( i(\mu_{s_0},a_0) + i(\mu_{s_0},d_0) \right) a_0.
 \end{align*}
Applying $\rho T_{b_1}T_{c_0}$ to this is the same as applying $\phi$ to $\mu_{s_0}$ since $T_{a_0}$ commutes with $T_{b_1}T_{c_0}$.  Therefore
\[ \phi^{s_0+ 1}(\mu) = \phi(\mu_{s_0}) =  t a_1 + \mu '\] where
\begin{align*}
s &= s_0 + 1,\\
t &= i(\mu_{s_0},a_0) + i(\mu_{s_0},d_0) >0, \quad \mbox{and}\\
\mu' &= \rho T_{b_1}T_{c_0}(\mu_{s_0} + i(\mu_{s_0},d_0)d_0 ) \in P_\tau.\\
\end{align*}

The second step is to show that, for any $k \geqslant g-1$, we have $\phi^k(a_1) \in \mathrm{int}(P_\tau)$.   
This follows from the fact that, for any $k \geqslant g-1$, there is a path of length $k$ from $a_1$ to any other vertex $x \in \Omega$, see Figure \ref{F:genus9lip2}.

From these two steps,  we have
\begin{align*} 
\phi^{2g-1}(\mu)& = \phi^{2g-1-s}(\phi^s(\mu))\\
& = \phi^{2g-1-s}(t a_1 + \mu') \\
&= t \phi^{2g-1-s}(a_1) + \phi^{2g-1-s}(\mu').
\end{align*}
The iterate $s$ from step one satisfies $2g-1-s \geqslant g-1$. 
By step two, we know that the right--hand side lies in $\mathrm{int}(P_\tau) + P_\tau \subset \mathrm{int}(P_\tau)$.  
It follows that $\phi^{2g-1}(P_\tau) \subset \mathrm{int}(P_\tau)$ and so $2g-1$ is a mixing number for $\phi$ and $\tau$. 
\end{proof}

\subsection{Bounds on dilatations.} 

\begin{lemma}\label{LogGrowth-lem} For $g > 4$, the mapping classes $\phi_g$ satisfy
\[
\frac{\log(4g-4)}{2g-2} \leqslant \log(\lambda(\phi_g)) \leqslant \frac{\log(10g-21)}{g-2}.
\]
\end{lemma}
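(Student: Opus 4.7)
The plan is to work directly with the integer transition matrix $A$ of $\phi_g$ acting on $P_\Omega$ in the basis $\Omega$; by the theorem recalled above, $\lambda(\phi_g)$ is the Perron--Frobenius eigenvalue of $A$, so both inequalities reduce to explicit estimates on $\lambda(A)$. The rows of $A$ can be read off from $\phi_g = \rho \circ T_{a_0} \circ T_{b_1} \circ T_{c_0} \circ T_{d_0}^{-1}$ via the Penner--style identities $T_x(\mu) = \mu + i(x,\mu)\,x$ for the positively twisted $x$ and $T_{d_j}^{-1}(\mu) = \mu + i(d_j,\mu)\,d_j$; the source vertices $a_0, a_1, b_0, b_1, c_0, d_0$ then have row sums $6, 6, 6, 6, 11, 5$, respectively, while every other vertex acts as a pure shift with row sum $1$, reproducing the digraph $G_g$ of Figure \ref{F:genus9lip2}.

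For the lower bound, I would use that, since $A$ is irreducible and nonnegative, its Perron eigenvector $v$ is strictly positive, and so $(A^k)_{ii} \leq \lambda^k$ for every $i$ and $k$; it therefore suffices to exhibit $4g-4$ in closed-walk weight at some vertex in $2g-2$ steps. I would work at $a_1$, which carries a self-loop from $\phi(a_1) \supseteq a_1$, together with the ``$c$-loop''
\[
a_1 \to c_1 \to c_2 \to \cdots \to c_{g-2} \to c_0 \to a_1
\]
of length $g$ and weight product $4$, coming from the weight-$2$ edges $a_1 \to c_1$ and $c_0 \to a_1$ (from $\phi(a_1) \supseteq 2c_1$ and $\phi(c_0) \supseteq 2a_1$) together with the weight-$1$ shift edges in between. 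Padding this cycle with $g-2$ self-loops at $a_1$, distributed as $k$ before and $g-2-k$ after the cycle for $k = 0, \ldots, g-2$, produces $g-1$ distinct closed walks at $a_1$ of length $2g-2$, each of weight $4$. Hence $(A^{2g-2})_{a_1 a_1} \geq 4(g-1)$, and $\lambda^{2g-2} \geq 4g-4$.

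For the upper bound, I would apply the operator-norm inequality $\lambda^{g-2} \leq \|A^{g-2}\|_\infty = \max_x s_{g-2}(x)$, where $s_k(x) := \sum_y (A^k)_{xy}$, and compute $s_k$ via the recursion $s_k(x) = \sum_y A_{xy}\, s_{k-1}(y)$. Non-source vertices are pure shifts, so their row sums reduce to those of the source each one first reaches; at the six source vertices, the explicit rows of $A$ give simple linear recursions, yielding for $1 \leq k \leq g-3$ closed forms such as $s_k(a_1) = 5k+1$ and $s_k(c_0) = 10k+1$. Propagating one more step to $k = g-2$ and carefully tracking the first wrap-around contributions from non-source vertices shows that $\max_x s_{g-2}(x) \leq 10g-21$, with the maximum essentially attained at $x = c_0$, and this gives the stated upper bound.

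The hard part will be the upper bound, where the row-sum recursion has to be unwound across all four cyclic chains of length $g-1$ and the first wrap-around contributions tracked precisely enough to match the constant $10g-21$ (a naive application of the max-row-sum bound lands a few units off, and a weighted variant of the recursion may be needed to pin down the exact constant). The lower bound, by contrast, is essentially a single closed-walk observation combined with the standard Perron--Frobenius diagonal bound.
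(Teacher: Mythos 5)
Your lower bound argument takes a genuinely different route from the paper's. The paper simply cites Tsai (Proposition 2.4 of \cite{Tsa}) for the inequality $\log\lambda \geqslant \log(4g-4)/(2g-2)$, treating it as a general fact about Perron--Frobenius digraphs with a self--loop. You instead give a self-contained closed-walk count: you exhibit a length-$g$ cycle $a_1 \to c_1 \to \cdots \to c_{g-2} \to c_0 \to a_1$ whose two doubled edges contribute a multiplicity of $4$, pad it with the $g-2$ available self--loops at $a_1$ in $g-1$ distinct ways, and conclude $(A^{2g-2})_{a_1 a_1} \geqslant 4(g-1)$, hence $\lambda^{2g-2} \geqslant 4g-4$ by the diagonal bound $(A^k)_{ii} \leqslant \lambda^k$. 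I verified the cycle structure and the multiplicities from $\phi(a_1)$ and $\phi(c_0)$, and the argument is correct. This is a nice self-contained alternative: it avoids the external reference and makes the origin of the constant $4g-4$ transparent.

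For the upper bound you follow the same strategy as the paper (bound $\lambda^{g-2}$ by the maximum row sum of $A^{g-2}$, i.e.\ the maximum number of length-$(g-2)$ edge-paths from a vertex of $G_g$). However, there is a concrete error in your description of $A$ that goes beyond the ``a few units off'' worry you raise: the set of ``source'' vertices is not $\{a_0, a_1, b_0, b_1, c_0, d_0\}$. The vertices $d_1$ and $d_{g-2}$ are \emph{not} pure shifts. Indeed, $i(b_1, d_1) = 1$ gives $\phi(d_1) = b_2 + d_2$, and $i(a_0, d_{g-2}) = 1$ gives $\phi(d_{g-2}) = a_1 + d_0$, each with row sum $2$; the paper itself lists $d_{g-2}$ among the vertices whose path counts must be tracked. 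As a consequence the closed forms you propose (e.g.\ $s_k(a_1) = 5k+1$, $s_k(c_0) = 10k+1$) fail already at $k=2$: with the row for $c_0$ being $2a_1 + 2b_2 + 5c_1 + 2d_1$ one gets $s_2(c_0) = 2\cdot 6 + 2\cdot 1 + 5\cdot 1 + 2\cdot 2 = 23$, not $21$. You flag the upper bound as the delicate part, which is fair, but the recursion you set up starts from an incomplete list of non-shift rows, and that needs to be fixed before the bookkeeping can be made to land on $10g-21$. The paper avoids this by reading the path counts directly off the digraph figure rather than from a row-sum recursion.
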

\begin{proof}
The lower bound holds for any Perron--Frobenius digraph with a self--loop, thanks to work of Tsai (Proposition 2.4 of \cite{Tsa}), and so we prove only the upper bound.

%On the other hand, we can obtain a slightly larger upper bound on $\log(\lambda(\phi_g))$ with an easy, elementary calculation that we describe now.   
For any $j \leqslant g-2$, inspection reveals that the number of directed edge--paths in $G_g$ of length $j$ emanating from each of
\[ 
a_0, \ a_1, \ b_0, \ b_1, \ c_0, \ d_{g-2}, \ \mathrm{and}\ d_0
\]
to be
\[ 
(10j-6), \
5j, \
(10j-1), \
5j, \
(10j-6), \
(10j-11), \ \mathrm{and} \ 
(5j-1), \
\]
respectively---see Figure \ref{F:genus9lip2}.   For any other vertex $v$ of $G_g$, there is a unique edge--path starting at $v$ and ending at one of the vertices listed above, and every shorter edge--path is an initial segment of this one.  
It follows that the number of edge--paths of length $g-2$ starting at any vertex is maximized at one of the vertices listed above, and is hence at most $10g-21$.  

Let $A_g$ be the incidence matrix of $G_g$.  The maximum row sum of $A_g^{g-2}$ is precisely the maximum number of edge--paths starting at any vertex, and is hence at most $10g-21$.   
But the maximum row sum of a Perron--Frobenius matrix is an upper bound for its spectral radius.
Applying this to $A_g^{g-2}$ we have
\[ 
\log(\lambda(\phi_g)) = \frac{\log(\lambda(\phi_g)^{g-2})}{g-2} = \frac{\log(\lambda(\phi_g^{g-2}))}{g-2} \leqslant \frac{\log(10g-21)}{g-2}. \qedhere
\]
\end{proof}

Alternatively, one may calculate the characteristic polynomial $P_{G_g}(x)$ of $G_g$ by observing that the mapping classes $\phi_g$ are the monodromies of fibrations of a single $3$--manifold.  
In fact, all of the fibers lie in a single cone on a fibered face of the Thurston norm ball, and one can use the Teichm\"uller polynomial to calculate the $P_{G_g}(x)$ by specializing a single polynomial.  
See \cite{McMullen}.
The polynomial is
\[
P_{G_g} = x^{4g-4} - x^{4g-5} - x^{2g-1} - 10x^{2g-2} - x^{2g-3} - x + 1,
\]
and one may estimate $\lambda(\phi_g)$ by noting that it equals the maximum modulus of the roots of $P_{G_g}$, which is estimable due to the special form of $P_{G_g}$.  
Though more involved, this argument yields the better upper bound of 
\[
\log(\lambda(\phi_g)) \leqslant \frac{3\log(4g-4)}{(4g-4)}.
\]
%%%%%%%%%%%%
\subsection{The main theorem.}
%%%%%%%%%%%%

We can now assemble the proof of the main theorem.
\begin{proof}[Proof of Theorem \ref{T:main}]
Proposition \ref{P:upper bound} implies that
\[ 
	\kappa_g \leqslant \frac{2}{\log(g-\frac{1}{2})} \asymp \frac{1}{\log(g)}.
\]
Lemma \ref{L:comparing asymptotics} applied to the sequence $\phi_g:S_g \to S_g$ above, together with Proposition \ref{P:cc calc} and the upper bound in
Lemma \ref{LogGrowth-lem}, implies
\[ 
	\kappa_g \geqslant \frac{\ell_{\cC}(\phi_g)}{\log(\lambda(\phi_g))} \geqslant \frac{1/(2g-1)}{\log(10g-21)/(g-2)} \asymp \frac{1}{\log(g)}. \qedhere
\]
\end{proof}

%%%%%%%%%%%%%%
\subsection{Extremal length.}
%%%%%%%%%%%%%%
Masur and Minsky \cite{Mas-Min} use extremal length rather than hyperbolic length to define the map $\T(S) \to \curvskel{1}(S)$.  
Recall that the extremal length of a curve $\alpha$ with respect to $X$ in $\T(S)$ is $\Ext_X(\alpha) = 1/\mod_X(\alpha)$, where $\mod_X(\alpha)$ is the supremum of conformal moduli for embedded annuli with core curves homotopic to $\alpha$.  The set of curves with smallest extremal length,
\[ 
\short_\Ext(X) = \{ \alpha \mathrm{\ in\ } \curvskel{1}(S) \mid \Ext_X(\alpha) \leqslant\Ext_X(\beta)  \mbox{\ for all\ } \beta \in \curvskel{0}(S) \}, 
\]
is finite.    As with hyperbolic length, the set $\short_\Ext(X)$ has diameter bounded above by a constant $c = c(S)$ (Lemma 2.4 of \cite{Mas-Min}), and again we view $\short_\Ext$ as a map $\T(S) \to \curvskel{1}(S)$.  This map is also coarsely Lipschitz, and we let $\kappa_g^\Ext$ denote the optimal Lipschitz constant for $\short_\Ext:\T(S_g) \to \curvskel{1}(S_g)$.
\begin{proposition}\label{Extremal.Length.Corollary}
We have $\kappa_g = \kappa_g^\Ext$ for all $g$.  
In particular, $\kappa_g^\Ext \asymp \frac{1}{\log(g)}$.
\end{proposition}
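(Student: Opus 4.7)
The plan is to reduce the equality of optimal Lipschitz constants to a \emph{pointwise} comparison of the two systole maps in the curve complex, uniformly in $X \in \T(S_g)$, with a bound that is allowed to depend on $g$. Concretely, I will show there is a constant $D = D(g)$ with
\[
d_\cC(\short(X), \short_\Ext(X)) \leqslant D \quad \text{for all } X \in \T(S_g).
\]
Once this is established, any $(K,C)$-coarse Lipschitz decomposition for one map yields a $(K, C+2D)$-coarse Lipschitz decomposition for the other by the triangle inequality, so the infima over $K$ agree. Since $g$ is fixed in the definition of $\kappa_g$ and $\kappa_g^\Ext$, the dependence of $D$ on $g$ is harmless, and the asymptotic statement $\kappa_g^\Ext \asymp 1/\log(g)$ is then immediate from Theorem \ref{T:main}.

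The pointwise bound will come from three standard ingredients. First, Bers' theorem provides a constant $L_g$ such that $\ell_X(\short(X)) \leqslant L_g$ for every $X \in \T(S_g)$. Second, Maskit's comparison between hyperbolic and extremal length,
\[
\frac{\ell_X(\gamma)}{\pi} \leqslant \Ext_X(\gamma) \leqslant \frac{1}{2}\ell_X(\gamma)\, e^{\ell_X(\gamma)/2},
\]
applied to $\alpha = \short(X)$ gives an upper bound $\Ext_X(\alpha) \leqslant B(L_g)$. Since $\beta = \short_\Ext(X)$ minimizes extremal length, $\Ext_X(\beta) \leqslant \Ext_X(\alpha) \leqslant B(L_g)$, and the lower Maskit inequality then forces $\ell_X(\beta) \leqslant \pi B(L_g)$. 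Thus both $\alpha$ and $\beta$ have hyperbolic length bounded by a constant $M_g$ depending only on $g$.

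Third, any two simple closed curves of hyperbolic length at most $M_g$ on $X$ have geometric intersection number bounded in terms of $M_g$ and $g$: each such curve carries a standard collar of definite width (as in Lemma \ref{L:collar on short}), and any curve of length $\leqslant M_g$ can enter such a collar only boundedly many times. A bound on $i(\alpha,\beta)$ then bounds $d_\cC(\alpha,\beta)$ by a standard estimate (one can even argue as in Lemma \ref{L:intersection number bound} to conclude that $\alpha$ and $\beta$ cannot fill unless the intersection bound is at least $2g-1$, in which case the dependence on $g$ simply absorbs into $D$).

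The main obstacle is merely cosmetic: pinning down explicit constants in Maskit's inequality and in the bounded-intersection-to-bounded-distance implication. Since only the existence of a uniform bound $D = D(g)$ is needed, no delicate estimate is required, and the argument is essentially formal once these three classical facts are assembled.
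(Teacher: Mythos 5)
Your strategy is the same as the paper's: establish a uniform pointwise bound $d_\cC(\short(X),\short_\Ext(X)) \leqslant D(g)$ for all $X$, then transfer any $(K,C)$--coarse Lipschitz decomposition between the two maps with the same leading constant $K$. The difference is in execution. The paper bounds $\Ext_X(\short(X))$ directly from the collar of Lemma~\ref{L:collar on short} (a collar of width $\ell_X(\alpha)/2$ gives an annulus of definite modulus once $\ell_X(\alpha)$ is bounded, implicitly invoking Bers' constant as you make explicit), and then cites Lemma~2.5 of Masur--Minsky, which packages the implication ``both curves have small extremal length $\Rightarrow$ small $d_\cC$'' in one step. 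You instead round-trip through Maskit's inequality to bound $\ell_X(\short_\Ext(X))$, and then re-derive the distance bound from collar/intersection-number estimates by hand --- essentially reproving the ingredients of Masur--Minsky's Lemma 2.5 rather than quoting it. Both routes are correct; the paper's is tighter because it never leaves extremal-length language after the collar step, while yours is more self-contained at the cost of invoking Maskit in both directions. One small caveat worth noting: your bound on $i(\alpha,\beta)$ in step three silently uses that \emph{both} curves admit collars of definite width, which requires knowing both are short; you do establish this, but it deserves a sentence, since $\beta$ is short in $X$ only after the Maskit detour, not a priori.
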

\begin{proof}
Suppose $\alpha$ in $\short(X)$.  
The collar neighborhood of width $\ell_X(\alpha)/2$ from Lemma \ref{L:collar on short} provides a conformal annulus of definite modulus (depending on $\ell_X(\alpha)$), and hence $\Ext_X(\alpha) < L'$ for some $L' = L'(S)$.  Now let $\beta$  lie in $\short_\Ext(X)$, so that $\Ext_X(\beta) \leqslant L'$.  
By Lemma 2.5 of \cite{Mas-Min},
$
	d(\alpha,\beta) \leqslant  2 L' + 1.
$
From this we deduce
\[ 
	|\short(X) - \short_\Ext(X) | < 2L' + 1. 
\]
Therefore, if one of $\short$ or $\short_\Ext$ is $(K,C)$--coarsely Lipschitz, then, by the triangle inequality, the other is $(K,C+ 2(2L' + 1))$--coarsely Lipschitz.  The proposition follows.  
\end{proof}

\end{document}